\documentclass[pdflatex,sn-mathphys-num]{sn-jnl}
\usepackage{amsmath,amssymb,amsthm,microtype,tikz}
\makeatletter
\renewcommand\normalsize{\@setfontsize\normalsize{11}{13.2}\abovedisplayskip 10pt plus2pt minus2pt\belowdisplayskip\abovedisplayskip\abovedisplayshortskip 0pt plus3pt\belowdisplayshortskip 6pt plus3pt minus3pt\let\@listi\@listI}
\makeatother
\normalsize
\makeatletter
\AtBeginDocument{\renewenvironment{proof}[1][\proofname]{\par\pushQED{\qed}\normalfont\normalsize\topsep6pt plus2pt\trivlist\item[\hskip\labelsep\itshape #1.]\ignorespaces}{\popQED\endtrivlist\@endpefalse}}
\makeatother

\newcommand{\conv}{\operatorname{conv}}
\newtheoremstyle{concise}{10pt plus2pt}{10pt plus2pt}{\itshape}{0pt}{\bfseries}{.}{.5em}{}
\theoremstyle{concise}
\newtheorem{theorem}{Theorem}
\newtheorem{corollary}[theorem]{Corollary}
\hypersetup{pdfauthor={Hanyue Shen and Pavel Dubovski},pdftitle={A sharp minimum for cube-apex bodies in every dimension}}
\begin{document}
\title[A sharp minimum for cube-apex bodies]{A sharp minimum for cube-apex bodies in every dimension}
\author*[1]{\fnm{Hanyue} \sur{Shen}}\email{hshen14@stevens.edu}
\author*[1]{\fnm{Pavel} \sur{Dubovski}}\email{pdubovsk@stevens.edu}
\affil[1]{\orgdiv{Department of Mathematical Sciences}, \orgname{Stevens Institute of Technology}, \orgaddress{\city{Hoboken}, \state{NJ} \postcode{07030}, \country{United States}}}
\abstract{We determine the minimum volume product of centrally symmetric polytopes obtained by adding facet apices to a cube while fixing their total outward height. Heights and tangential coordinates vary subject to a product condition on the polar-cap denominators. In every dimension at least two, the minimum has an explicit formula, attained by concentrating the height at one opposite pair of corner apices. For positive height in dimensions at least three, every minimizing array has this structure. The product condition strictly extends coordinatewise quadratic constraints. We also obtain two bounds without that condition: a four-dimensional Mahler bound for a specified height range, and a correction measuring volume generated jointly by two apices. The latter certifies a continuous family outside the preceding sufficient conditions. The proof combines known cap volumes with a sharp aggregate height estimate and a double-pyramid construction. Lean~4 verifies the actual-volume minimum, the necessary equality structure and both extensions.}
\keywords{Volume product, convex polytopes, polar duality, sharp inequalities, formal verification}
\pacs[Mathematics Subject Classification (2020)]{52A40, 52B11}
\maketitle
\vspace{-28pt}
\noindent\hspace*{.05\textwidth}\begin{minipage}{.9\textwidth}\small
\textbf{Acknowledgements.} The authors thank the Stevens Institute for Artificial Intelligence for computational resources.
\end{minipage}\par

\section{Introduction}
The symmetric Mahler conjecture asks whether $|K|\,|K^\circ|\ge4^n/n!$ for every origin-symmetric convex body $K$; see~\cite{survey}. We study an exact minimum for cubes with marked facet apices at prescribed total outward height. Classical cap volumes~\cite{lt,konig} reduce the polar estimate to a sharp comparison of the heights. Its strictness identifies the minimizing arrays. A separate geometric correction captures volume between two added pyramids.

The ordinary Mahler bound for a cube with one opposite apex pair follows from Karasev's projection theorem for Hanner polytopes~\cite[Theorem~4.4]{karasev}. Here the new conclusion is its exact comparison with all product-compatible arrays at the same total height. Local results near cubes and Hanner polytopes~\cite{nprz,kim} concern small perturbations; our constrained formula holds for every height. Reflection and rotation symmetries~\cite{bf,is}, anti-blocking structure~\cite{ass}, isocanted cubes~\cite{pc} and shadow systems~\cite{mr} provide related settings. Our constraint instead acts on individual polar-cap denominators.

\section{The minimum and its equality structure}
Let $n\ge2$, $C=[-1,1]^n$, and
\begin{equation}\label{eq:setup}
 P=\conv(C\cup\{\pm v_1,\ldots,\pm v_n\}),\quad
 (v_i)_i=1+a_i,\quad a_i\ge0,\quad |(v_i)_j|\le1\ (j\ne i),\quad A=\sum_i a_i.
\end{equation}
Write $\Phi(P)=|P|\,|P^\circ|$, where $P^\circ=\{y:x\cdot y\le1\ \forall x\in P\}$ and $|\cdot|$ is Lebesgue volume. In particular, $C^\circ=B_1^n=\{y:\sum_j|y_j|\le1\}$. The height $A$ belongs to the marked representation. Define
\begin{equation}\label{eq:product}
 d_i=\prod_{j\ne i}((1+a_i)^2-(v_i)_j^2),\qquad
 d_i\ge a_i^{n-1}\prod_{j\ne i}(2+a_i+a_j)\quad(a_i>0).
\end{equation}
We call arrays satisfying this inequality \emph{product compatible}; inactive indices impose no denominator condition.

\begin{theorem}\label{thm:minimum}
For fixed $A\ge0$, the minimum over arrays satisfying~\eqref{eq:setup}--\eqref{eq:product} is
\begin{equation}\label{eq:min}
 \min\Phi(P)=c_nF_n(A),\qquad c_n=\frac{4^n}{n!},\qquad
 F_n(A)=\left(1+\frac An\right)\left(1-\frac{A(A+1)^{n-2}}{(A+2)^{n-1}}\right).
\end{equation}
It is attained by $P_A=\conv(C\cup\{\pm(1+A,1,\ldots,1)\})$. Moreover, $F_2(A)=F_n(0)=1$, while $F_n(A)>1$ for $n\ge3$, $A>0$. In that latter case every minimizing array has exactly one active index $i$, with $a_i=A$ and $|(v_i)_j|=1$ for all $j\ne i$; its body is $P_A$ up to cube-coordinate symmetries.
\end{theorem}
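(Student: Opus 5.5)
The plan is to bound the two factors of $\Phi(P)=|P|\,|P^\circ|$ separately, reduce everything to one scalar inequality in the heights, and read off the equality cases from the strictness of each step.

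\emph{Primal volume.} Each apex $v_i$ lies over the facet $\{x_i=1\}$, and its tangential coordinates lie in $[-1,1]$. So $\conv(\{x\in C:x_i=1\}\cup\{v_i\})$ is a pyramid of height $a_i$ over a facet of area $2^{n-1}$, and these pyramids, together with their mirror images, have disjoint interiors outside $C$. Since $P$ contains $C$ and all these pyramids,
\[
|P|\ \ge\ 2^n+2\sum_i\frac{2^{n-1}a_i}{n}=2^n\Bigl(1+\frac An\Bigr).
\]
Equality holds when only one index is active, because $P$ is then exactly $C$ plus two pyramids. In general $P$ may be strictly larger, since hull volume can appear between two pyramids. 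That only helps the lower bound, so I will not need to control it here.

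\emph{Polar volume.} We have $P^\circ=B_1^n\cap\bigcap_i\{y:|v_i\cdot y|\le1\}$, so $P^\circ$ is $B_1^n$ with caps removed near $\pm e_i$. I will use the cap volumes of~\cite{lt,konig}, or recompute them orthant by orthant. In the orthant with sign vector $s$, the cap at $e_i$ is a simplex with apex $e_i$. Its edges are cut at parameters $a_i/(1+a_i)$ toward the origin and $a_i/((1+a_i)-s_j(v_i)_j)$ toward $s_je_j$. Its volume is $\frac1{n!}$ times the product of these parameters. Summing over the $2^{n-1}$ sign patterns pairs $s_j=\pm1$ and gives
\[
\mathrm{cap}_i=\frac{2^{n-1}}{n!}\,\frac{a_i^n(1+a_i)^{n-2}}{d_i}.
\]
I will check that caps at different vertices are disjoint; this should follow from $|(v_i)_j|\le1$ together with $v_i\cdot y>1$ and $v_k\cdot y>1$ being incompatible on $B_1^n$. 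Then the product-compatibility condition~\eqref{eq:product} yields
\[
\frac{|P^\circ|}{|B_1^n|}\ \ge\ 1-\sum_i G_i,\qquad G_i=\frac{a_i(1+a_i)^{n-2}}{\prod_{j\ne i}(2+a_i+a_j)}.
\]
Equality in the $i$-th term requires $d_i$ to equal its bound. For a single active index this forces $|(v_i)_j|=1$.

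\emph{Aggregate height estimate.} Multiplying the two bounds gives $\Phi(P)\ge c_n(1+A/n)(1-\sum_iG_i)$. It therefore remains to prove
\[
\sum_i G_i\ \le\ \frac{A(A+1)^{n-2}}{(A+2)^{n-1}}\qquad\text{whenever }\textstyle\sum_ia_i=A,
\]
with strict inequality when $n\ge3$ and at least two indices are active. For a single active index $a_i=A$, every factor $2+a_i+a_j$ equals $2+A$, so the bound is attained; this also confirms the value at $P_A$. To prove the inequality, I would first try a merging argument. Replace two active heights $a_i,a_k$ by the single height $a_i+a_k$ at index $i$, and show the sum strictly increases for $n\ge3$. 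The mechanism is that $t\mapsto t(1+t)^{n-2}$ is superadditive, and merging also turns the cross factors $2+a_i+a_k$ into the smaller $2+(a_i+a_k)$. The difficulty is that the other factors $2+a_i+a_j$ change as well, since $a_i$ grows and the inactive $a_j$ stay fixed. Finitely many merges then reach the one-index configuration. I expect this step to be the main obstacle. If direct merging is messy, my fallback is a convexity or Schur-type argument in $(a_1,\dots,a_n)$ on the simplex $\sum a_i=A$, maximized at a vertex.

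\emph{Remaining claims.} $F_2\equiv1$ is immediate: $(1+A/2)\bigl(1-\frac A{A+2}\bigr)=1$. $F_n(0)=1$ is also immediate. For $n\ge3$ and $A>0$, the claim $F_n(A)>1$ is equivalent to
\[
\frac An\,(A+2)^{n-1}>\Bigl(1+\frac An\Bigr)A(A+1)^{n-2}.
\]
I would verify this by expanding binomially, or by taking logarithms and comparing derivatives. For the equality structure, a minimizer must be tight in the aggregate estimate, which forces exactly one active index with $a_i=A$. It must also be tight in the cap bound, which forces $|(v_i)_j|=1$ for all $j\ne i$. Then $v_i$ is a sign change of $(1+A,1,\dots,1)$ after a permutation of coordinates, so $P=P_A$ up to cube symmetries.
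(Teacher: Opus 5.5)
Your primal bound, your orthant-by-orthant cap formula and the reduction $r_i\le G_i$ via product compatibility all agree with the paper. The gap is the step you yourself call the main obstacle. The aggregate estimate $\sum_iG_i\le R_n(A)$, strict when $n\ge3$ and two heights are positive, is never proved. Yet it carries both the value of the minimum and the whole equality classification. What you give is only a plan, and the stated mechanism is off. Merging leaves the cross factor $2+a_i+a_k=2+(a_i+a_k)$ unchanged. Superadditivity of $t(1+t)^{n-2}$ alone does not control the growing factors $2+a_i+a_j$ you worry about. The paper's missing idea is a one-line estimate. Keep only the constant and linear terms in the $a_j$:
\[
\prod_{j\ne i}\bigl((2+a_i)+a_j\bigr)\ge(2+a_i)^{n-1}+(2+a_i)^{n-2}\sum_{j\ne i}a_j=(2+a_i)^{n-2}(2+A).
\]
Since $t\mapsto(1+t)/(2+t)$ is increasing, this gives
\[
G_i\le\frac{a_i}{A+2}\Bigl(\frac{1+a_i}{2+a_i}\Bigr)^{n-2}\le\frac{a_i}{A+2}\Bigl(\frac{1+A}{2+A}\Bigr)^{n-2}.
\]
Summing gives $R_n(A)$. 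The last inequality is strict whenever $0<a_i<A$ and $n\ge3$, which is exactly the strictness your classification needs.

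Your merging route can in fact be completed, and this resolves the difficulty you raised. Merge $a_i,a_k>0$ into $s=a_i+a_k$ at index $i$. For every third index $l$, $(2+a_l+a_i)(2+a_l+a_k)-(2+a_l)(2+a_l+s)=a_ia_k\ge0$, so $G_l$ does not decrease. For the merged pair, remove the common factor $1/(2+s)$. The contributions are then $\phi(a_i)+\phi(a_k)$ before and $\phi(s)$ after, with $\phi(t)=t\prod_{m\ne i,k}\frac{1+t}{2+t+a_m}$. Pairing each factor $1+t$ with one denominator factor shows that $\phi(t)/t$ is strictly increasing for $n\ge3$, so $\phi(a_i)+\phi(a_k)<\phi(s)$. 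At most $n-1$ merges reach the single-index value $R_n(A)$. The paper's route is shorter; merging gives a monotone deformation toward $P_A$.

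Separately, drop the claim that caps at different vertices are disjoint. It is false: take $v_1=(1+a,1,0,\ldots,0)$, $v_2=(1,1+b,0,\ldots,0)$ and $y$ near $(\tfrac12,\tfrac12,0,\ldots,0)$. It fails even for the product-compatible planar pair $v_1=(\tfrac85,\tfrac45)$, $v_2=(\tfrac45,\tfrac85)$. You do not need it, though: the union bound $|P^\circ|\ge|B_1^n|-\sum(\text{cap volumes})$ is all the paper uses. In the equality case only one antipodal pair of caps remains.
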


\begin{proof}
The disjoint facet pyramids and the one-vertex cap formula~\cite[Section~3.3]{lt} give
\begin{equation}\label{eq:volumes}
 |P|\ge2^n(1+A/n),\qquad |P^\circ|\ge\frac{2^n}{n!}(1-R),\qquad
 R=\sum_i r_i,\quad r_i=\frac{(1+a_i)^{n-2}a_i^n}{d_i}\ (a_i>0),
\end{equation}
with $r_i=0$ at inactive indices. Indeed, an active apex cuts only the corresponding cross-polytope vertex; the polar estimate is the cap union bound. Active denominators are positive, and height-zero points lie in $C$.

Keeping the constant and linear terms of the nonnegative product expansion yields
$\prod_{j\ne i}(2+a_i+a_j)\ge(2+a_i)^{n-2}(2+A)$. Thus
\begin{equation}\label{eq:share}
 r_i\le\frac{a_i}{A+2}\left(\frac{1+a_i}{2+a_i}\right)^{n-2}
 \le\frac{a_i}{A+2}\left(\frac{1+A}{2+A}\right)^{n-2},\qquad
 R\le R_n(A):=\frac{A(A+1)^{n-2}}{(A+2)^{n-1}}<1.
\end{equation}
Multiplying~\eqref{eq:volumes} proves the bound. For $P_A$, there are exactly two opposite facet pyramids and one opposite cap pair, so both volume bounds are equalities. The array with first row $(1+A,1,\ldots,1)$ and all other rows $(1,\ldots,1)$ is admissible, proving attainment. The identities at $n=2$ and $A=0$ follow directly. For $n\ge3$, Bernoulli's strict inequality gives
$(A+2)^{n-1}>(A+n)(A+1)^{n-2}$, which is equivalent to $F_n(A)>1$ when $A>0$.

If two heights are positive and $n\ge3$, the second inequality in~\eqref{eq:share} is strict for an active index, so equality forces a single active row. Equality then also forces
$d_i=[A(A+2)]^{n-1}$. Each of its factors is at least $A(A+2)>0$; hence every factor equals this number and $|(v_i)_j|=1$. Inactive points lie in the cube, completing the classification.
\end{proof}

\begin{figure}[t]
\centering
\begin{minipage}[b]{.48\textwidth}\centering
\begin{tikzpicture}[line join=round,line cap=round,font=\scriptsize]
\filldraw[fill=blue!26,draw=black!80,line width=.55pt] (-2.39630,-0.88209)--(-1.49769,-1.11221)--(-1.49769,0.88209)--cycle;
\filldraw[fill=blue!18,draw=black!80,line width=.55pt] (1.49769,-0.88209)--(2.39630,0.88209)--(0.29954,-1.57243)--cycle;
\filldraw[fill=gray!13,draw=black!80,line width=.55pt] (-0.29954,1.57243)--(-1.49769,0.88209)--(0.29954,0.42187)--(1.49769,1.11221)--cycle;
\filldraw[fill=gray!13,draw=black!80,line width=.55pt] (-1.49769,0.88209)--(-1.49769,-1.11221)--(0.29954,-1.57243)--(0.29954,0.42187)--cycle;
\filldraw[fill=blue!20,draw=black!80,line width=.55pt] (0.29954,0.42187)--(2.39630,0.88209)--(1.49769,1.11221)--cycle;
\filldraw[fill=blue!26,draw=black!80,line width=.55pt] (0.29954,-1.57243)--(2.39630,0.88209)--(0.29954,0.42187)--cycle;
\fill (2.39630,0.88209) circle(.65pt) node[above right] {$v$};
\fill (-2.39630,-0.88209) circle(.65pt) node[below left] {$-v$};
\end{tikzpicture}
\par (a) $P_1$, with $v=(2,1,1)$
\end{minipage}\hfill
\begin{minipage}[b]{.48\textwidth}\centering
\begin{tikzpicture}[line join=round,line cap=round,font=\scriptsize]
\draw[gray!65,densely dashed,line width=.35pt] (1.16487,0.67116)--(1.74731,-0.44744);
\draw[gray!65,densely dashed,line width=.35pt] (1.16487,0.67116)--(-1.74731,0.44744);
\draw[gray!65,densely dashed,line width=.35pt] (-1.16487,-0.67116)--(1.74731,-0.44744);
\draw[gray!65,densely dashed,line width=.35pt] (-1.16487,-0.67116)--(-1.74731,0.44744);
\draw[gray!65,densely dashed,line width=.35pt] (0.00000,1.93890)--(1.74731,-0.44744);
\draw[gray!65,densely dashed,line width=.35pt] (0.00000,1.93890)--(-1.74731,0.44744);
\draw[gray!65,densely dashed,line width=.35pt] (0.00000,1.93890)--(1.16487,0.67116);
\draw[gray!65,densely dashed,line width=.35pt] (0.00000,1.93890)--(-1.16487,-0.67116);
\draw[gray!65,densely dashed,line width=.35pt] (0.00000,-1.93890)--(1.74731,-0.44744);
\draw[gray!65,densely dashed,line width=.35pt] (0.00000,-1.93890)--(-1.74731,0.44744);
\draw[gray!65,densely dashed,line width=.35pt] (0.00000,-1.93890)--(1.16487,0.67116);
\draw[gray!65,densely dashed,line width=.35pt] (0.00000,-1.93890)--(-1.16487,-0.67116);
\filldraw[fill=blue!18,draw=black!80,line width=.55pt] (0.77658,-0.52201)--(1.16487,-0.94459)--(1.16487,0.67116)--(0.00000,1.93890)--cycle;
\filldraw[fill=gray!13,draw=black!80,line width=.55pt] (1.16487,-0.94459)--(0.77658,-0.52201)--(-1.16487,-0.67116)--(0.00000,-1.93890)--cycle;
\filldraw[fill=gray!13,draw=black!80,line width=.55pt] (-1.16487,0.94459)--(-1.16487,-0.67116)--(0.00000,1.93890)--cycle;
\filldraw[fill=gray!13,draw=black!80,line width=.55pt] (0.00000,1.93890)--(-1.16487,-0.67116)--(0.77658,-0.52201)--cycle;
\end{tikzpicture}
\par (b) $P_1^\circ=B_1^3\cap\{|v\cdot y|\le1\}$
\end{minipage}
\caption{The attaining body and its polar in dimension three at $A=1$. Left: the cube with two added pyramids, whose exposed faces are blue. Right: the octahedron $B_1^3$, shown by dashed edges, intersected with $|2y_1+y_2+y_3|\le1$; a new polar face is blue}
\label{fig:geometry}
\end{figure}

The stronger inequalities $(v_i)_j^2\le1-a_i a_j$ imply~\eqref{eq:product} factorwise. The inclusion is strict: in dimension four take diagonal entries $3/2$, cyclic entries $(v_1)_2=(v_2)_3=(v_3)_4=(v_4)_1=1$, and all other entries zero. Then $A=2$ and $d_i=405/64>27/8$, although the stronger inequalities fail. Conversely, the facet conditions alone cannot yield~\eqref{eq:min}: $v_i=4e_i$ gives $P=4B_1^4$, $A=12$ and $\Phi(P)=32/3<c_4F_4(12)$. Figure~\ref{fig:geometry}(a,b) illustrates the attaining body and its polar.

\section{Bounds without product compatibility}
\begin{theorem}\label{thm:height}
In dimension four,~\eqref{eq:setup} and $A\le11/4$ imply $\Phi(P)\ge32/3$.
\end{theorem}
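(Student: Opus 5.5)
The plan is to rerun the two volume estimates from the proof of Theorem~\ref{thm:minimum}, replacing the product condition~\eqref{eq:product} by the worst case permitted by the facet conditions alone. The height restriction $A\le 11/4$ should then be exactly what makes the resulting bound reach $c_4=32/3$.

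\textbf{Primal volume.} The disjoint facet pyramids give, exactly as in~\eqref{eq:volumes}, $|P|\ge 16(1+A/4)$. No product compatibility is used here.

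\textbf{Polar volume.} We have $P^\circ=B_1^4\cap\bigcap_i\{|v_i\cdot y|\le1\}$. For an active index $i$, the constraint $v_i\cdot y\le 1$ removes a cap at the vertex $e_i$ of $B_1^4$, and symmetrically at $-e_i$. It cuts no other vertex, because $v_i\cdot(\pm e_j)=\pm(v_i)_j\le1$ for $j\ne i$ by~\eqref{eq:setup}. Hence the one-vertex cap formula~\cite[Section~3.3]{lt} applies with $d_i>0$ (each factor is positive since $|(v_i)_j|\le1<1+a_i$). The union bound then gives $|P^\circ|\ge\frac{16}{24}(1-\sum_i r_i)$ with $r_i=(1+a_i)^2a_i^4/d_i$.

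Without~\eqref{eq:product}, we only have $d_i\ge [a_i(a_i+2)]^3$, attained when $|(v_i)_j|=1$. Hence
\[
 r_i\le f(a_i):=\frac{a_i(1+a_i)^2}{(a_i+2)^3}=a_i\,g(a_i),\qquad g(a)=\frac{(1+a)^2}{(a+2)^3}.
\]
A logarithmic derivative gives $g'/g=2/(1+a)-3/(a+2)$, which is positive exactly for $a<1$. So $g$ has global maximum $g(1)=4/27$ on $[0,\infty)$, and therefore $\sum_i r_i\le \tfrac{4}{27}A$.

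\textbf{Combination.} Multiplying the two estimates,
\[
 \Phi(P)\ge \frac{32}{3}\Bigl(1+\frac A4\Bigr)\Bigl(1-\frac{4A}{27}\Bigr).
\]
We need this to be at least $32/3$. Expanding, the inequality becomes $\tfrac A4-\tfrac{4A}{27}\bigl(1+\tfrac A4\bigr)\ge0$. For $A>0$ this is equivalent to $1+A/4\le 27/16$, i.e.\ $A\le 11/4$, which is precisely the hypothesis. The case $A=0$ is trivial since then $P=C$.

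The main point to check carefully is the polar step: that the cap formula of~\cite{lt} remains valid, and remains an upper bound on the removed volume, when some $|(v_i)_j|=1$, so that the hyperplane passes through other vertices of $B_1^4$ without cutting them. Caps from different indices may overlap, but the union bound only needs each cap's volume individually, so overlaps are harmless for a lower bound. The sharp constant $4/27$ is attained at $a_i=1$, which explains the threshold $11/4$.
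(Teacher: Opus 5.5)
Your proposal is correct and follows essentially the same route as the paper: the facet conditions alone give $d_i\ge[a_i(a_i+2)]^3$, hence $r_i\le a_i(1+a_i)^2/(a_i+2)^3\le 4a_i/27$, and combining this with~\eqref{eq:volumes} yields $\Phi(P)\ge 32/3$ exactly when $A\le 11/4$. The only differences are cosmetic: you locate the maximum $4/27$ with a logarithmic derivative instead of the paper's factorization $4(a+2)^3-27(a+1)^2=(a-1)^2(4a+5)$, and you expand the product directly instead of passing through $R\le A/(A+4)$.
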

\begin{proof}
The facet bounds imply $r_i\le a_i(1+a_i)^2/(a_i+2)^3\le4a_i/27$, since
$4(a+2)^3-27(a+1)^2=(a-1)^2(4a+5)\ge0$. Hence
$R\le4A/27\le A/(A+4)$ for $A\le11/4$, and~\eqref{eq:volumes} gives the claim.
\end{proof}

For a pair of active indices $i=n-1$, $j=n$, put $a=a_i$, $b=a_j$, $\beta=1-(v_i)_j$, $\gamma=1-(v_j)_i$. Suppose $D=ab-\beta\gamma\ge0$ and define
\begin{equation}\label{eq:gain}
 p=\frac{D}{b+\beta},\qquad q=\frac{D}{a+\gamma},\qquad g=\frac{pq}{2n(n-1)}.
\end{equation}
\begin{theorem}\label{thm:pair}
Under these assumptions and~\eqref{eq:setup},
\begin{equation}\label{eq:pair}
 |P|\ge2^n(1+A/n+g),\qquad
 \Phi(P)\ge c_n(1+A/n+g)(1-R).
\end{equation}
Consequently $R\le(A/n+g)/(1+A/n+g)$ implies $\Phi(P)\ge c_n$.
\end{theorem}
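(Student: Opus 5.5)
The plan is to keep the polar estimate from~\eqref{eq:volumes} unchanged and to improve only the bound on $|P|$. Beyond the cube and the $2n$ disjoint facet pyramids, I will exhibit two further pieces of $P$, symmetric under $x\mapsto-x$, that are disjoint from the cube, from all pyramids and from each other, with total volume $2^n g$. Multiplying the improved bound $|P|\ge2^n(1+A/n+g)$ by $|P^\circ|\ge\frac{2^n}{n!}(1-R)$ then gives~\eqref{eq:pair}. The final sentence is pure algebra: $(1+A/n+g)(1-R)\ge1$ is equivalent to $R\le(A/n+g)/(1+A/n+g)$.

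\emph{Locating the extra region.} Write $x=(x',s,t)$ with $s=x_{n-1}$ and $t=x_n$. Project onto the $(s,t)$-plane. Then $v=v_{n-1}$ maps to $(1+a,1-\beta)$ and $w=v_n$ maps to $(1-\gamma,1+b)$. Parametrize the segment $[v,w]$ and solve for its crossings. It meets $s=1$ at height $t=1+q$ and meets $t=1$ at $s=1+p$, with $p,q$ as in~\eqref{eq:gain}. The hypothesis $D\ge0$ is exactly the condition that this segment passes weakly beyond the corner $(1,1)$. So the sub-segment $S\subset[v,w]$ between these two crossings lies in $\{s\ge1,\ t\ge1\}$. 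Let $F=\{x\in C: s=t=1\}$, an $(n-2)$-face of volume $2^{n-2}$.

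The candidate piece is $J=\conv(F\cup S)\subset P$, which lies in $\{s\ge1,t\ge1\}$. I will check disjointness there up to null sets:
\begin{itemize}
\item the cube meets this region only in $F$;
\item the pyramid over the facet $s=1$ is the convex hull of that facet and $v$, and both have $t\le1$ because $|(v_{n-1})_n|\le1$; hence this pyramid has $t\le1$;
\item symmetrically, the pyramid over $t=1$ has $s\le1$;
\item every other pyramid, and every negative pyramid, has $s\le1$ and $t\le1$ by the constraint $|(v_i)_j|\le1$.
\end{itemize}
The reflected piece $-J$ lies in $\{s\le-1,\ t\le-1\}$ and is handled the same way.

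\emph{Volume of the join.} For $J$ I use a slicing (Cavalieri/shear) argument. The projection of $J$ to the $(s,t)$-plane is the triangle $T$ with vertices $(1,1)$, $(1+p,1)$ and $(1,1+q)$, of area $pq/2$. Each point of $T$ has barycentric weight $\lambda$ on the vertex $(1,1)$, and the fibre of $J$ over that point is a translate of $\lambda F$. The $x'$-coordinates of $S$ vary, but this only translates the fibres and does not change their volumes. Hence
\[
|J|=2^{n-2}\int_T\lambda^{n-2}=2^{n-2}\cdot\frac{pq}{2}\cdot\frac{2}{n(n-1)}=\frac{2^{n-2}pq}{n(n-1)}.
\]
Adding $-J$ gives $2^{n-1}pq/(n(n-1))=2^n g$, as required.

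\emph{Main obstacle.} The step I expect to need the most care is the disjointness and containment bookkeeping. The fibre of $J$ over $(1,1)$ must really be $F$, and the pieces $J$ must not overlap pyramids belonging to other indices. The point to verify is that every facet pyramid other than the two over $s=1$ and $t=1$ has both $s$ and $t$ in $[-1,1]$. This holds because every coordinate of $v_k$ other than its $k$-th lies in $[-1,1]$. The degenerate case $D=0$ gives $g=0$ and is trivial.
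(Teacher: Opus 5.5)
Your proposal is correct and is essentially the paper's proof. Since $S=[z_i,z_j]$, your $J=\conv(F\cup S)$ is exactly the paper's cell $W=\conv(F\cup\{z_i,z_j\})$, the disjointness argument via $J\subset\{x_{n-1}\ge1,\ x_n\ge1\}$ is the same, and your slicing computation of $|J|$ just replaces the paper's two successive pyramid formulas. The one thing to add is the sign case the paper states explicitly: multiplying the two lower bounds needs $1-R\ge0$, and for $R>1$ the right side of \eqref{eq:pair} is negative, so the product inequality then holds trivially.
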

\begin{proof}
The segment $[v_i,v_j]$ contains
$z_i=(bv_i+\beta v_j)/(b+\beta)$ and $z_j=(\gamma v_i+av_j)/(a+\gamma)$, whose $(i,j)$ coordinates are $(1+p,1)$ and $(1,1+q)$. Set
$F=\{x\in C:x_i=x_j=1\}$ and $W=\conv(F\cup\{z_i,z_j\})\subset P$.
Two successive pyramid formulas give $|W|=2^{n-2}pq/[n(n-1)]$, including $D=0$ and $n=2$ with zero-dimensional base volume one. Figure~\ref{fig:paircell} shows the cell and its normal projection. Since $W\subset\{x_i,x_j\ge1\}$, it meets the cube and each original facet pyramid only in coordinate hyperplanes. The same holds for $-W$, which is disjoint from $W$. Adding both cell volumes proves the primal bound. Multiply by~\eqref{eq:volumes} if $R\le1$; if $R>1$, the asserted product bound is negative.
\end{proof}

\begin{figure}[t]
\centering
\begin{minipage}[b]{.44\textwidth}\centering
\begin{tikzpicture}[line join=round,line cap=round,font=\scriptsize]
\filldraw[fill=blue!20,draw=black!80,line width=.55pt] (-1.73066,-0.44318)--(0.00000,-1.47726)--(1.15378,-0.66477)--cycle;
\filldraw[fill=blue!28,draw=black!80,line width=.55pt] (1.15378,-0.66477)--(0.00000,1.47726)--(-1.73066,-0.44318)--cycle;
\draw[densely dashed,gray!75,line width=.45pt] (0.00000,-1.47726)--(0.00000,1.47726) node[midway,left,fill=white,inner sep=1pt] {$F$};
\fill (-1.73066,-0.44318) circle(.65pt) node[left] {$z_i$};
\fill (1.15378,-0.66477) circle(.65pt) node[right] {$z_j$};
\end{tikzpicture}
\par (a) $W=\conv(F\cup\{z_i,z_j\})$
\end{minipage}\hfill
\begin{minipage}[b]{.53\textwidth}\centering
\begin{tikzpicture}[scale=1.1,>=stealth,font=\small]
\fill[blue!18] (0,0)--(1.3,0)--(0,1.3)--cycle;
\draw[->] (-.75,0)--(2.4,0) node[right] {$x_i-1$};
\draw[->] (0,-.9)--(0,2.05) node[above] {$x_j-1$};
\draw[thick] (2,-.7)--(-.5,1.8);
\draw[thick,blue!65!black] (0,0)--(1.3,0)--(0,1.3)--cycle;
\fill (2,-.7) circle(1.3pt) node[right] {$v_i$};
\fill (-.5,1.8) circle(1.3pt) node[left] {$v_j$};
\fill (1.3,0) circle(1.3pt) node[below] {$z_i$};
\fill (0,1.3) circle(1.3pt) node[left] {$z_j$};
\node[below left] at (0,0) {$F$};
\node at (.36,.39) {$W$};
\end{tikzpicture}
\par (b) Projection onto the two normal coordinates
\end{minipage}
\caption{Two views of a selected-pair cell in dimension three. Adjoining $z_i$ to the edge $F$, then $z_j$ to the resulting triangle, gives a tetrahedron. Under the normal projection, $F$ collapses to the origin, while $z_i,z_j$ are the intercepts of the apex segment. The cell lies beyond both adjacent cube facets}
\label{fig:paircell}
\end{figure}

\begin{corollary}\label{cor:family}
Let $P_t=\conv([-1,1]^4\cup\{\pm(1,1,1+t,1),\pm(1,1,1,1+t)\})$. Then $\Phi(P_t)>32/3$ for $0<t\le12/7$. For $3/2\le t\le12/7$, this certificate succeeds although product compatibility, the height condition of Theorem~\ref{thm:height}, and the uncorrected cap criterion $R\le A/(A+4)$ all fail.
\end{corollary}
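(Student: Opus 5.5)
The plan is to apply Theorem~\ref{thm:pair} with $n=4$ and the active pair $i=3$, $j=4$, computing every quantity explicitly as a function of $t$. Here $v_3=(1,1,1+t,1)$ and $v_4=(1,1,1,1+t)$, so $a_3=a_4=t$ and $a_1=a_2=0$, which gives $A=2t$. Since $(v_3)_4=(v_4)_3=1$, we have $\beta=\gamma=0$. Then $D=t^2\ge0$, and \eqref{eq:gain} gives $p=q=t$ and $g=t^2/24$. The constant is $c_4=32/3$.

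For the cap term, each active denominator is
\[
d_i=\prod_{j\ne i}\bigl((1+t)^2-1\bigr)=t^3(t+2)^3,
\]
so $r_3=r_4=t(1+t)^2/(t+2)^3$ and
\[
R=\frac{2t(1+t)^2}{(t+2)^3}.
\]
This is less than $1$ because $(t+2)^3-2t(1+t)^2=-t^3+2t^2+10t+8>0$ on the range considered. Theorem~\ref{thm:pair} then gives $\Phi(P_t)\ge\frac{32}{3}(1+t/2+t^2/24)(1-R)$, so strict inequality follows once
\[
R<\frac{t(t+12)}{t^2+12t+24}.
\]
Clearing the positive denominators, this is equivalent to
\[
(t+12)(t+2)^3-2(1+t)^2(t^2+12t+24)>0.
\]
Expanding gives the quartic $h(t)=-t^4-10t^3-14t^2+32t+48$, and the main step is to show $h>0$ on $(0,12/7]$. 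I would argue by concavity: $h''(t)=-12t^2-60t-28<0$, so $h$ is concave. It therefore suffices to check the endpoints: $h(0)=48$, and $h(12/7)>0$ by exact rational arithmetic (approximately $2.7$). I expect this endpoint check to be the only delicate point, since $12/7$ sits close to the root of $h$ and the margin is small.

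For the failure claims, the checks are as follows.
\begin{itemize}
\item \emph{Product compatibility} at $i=3$ requires $t^3(t+2)^3\ge t^3(2+t)^2(2+2t)$. This is equivalent to $t+2\ge 2+2t$, which is false for every $t>0$.
\item \emph{The height condition} of Theorem~\ref{thm:height} fails because $A=2t\ge3>11/4$ when $t\ge3/2$.
\item \emph{The uncorrected criterion} $R\le A/(A+4)=t/(t+2)$ is equivalent to $2(1+t)^2\le(t+2)^2$, that is $t^2\le2$. It therefore fails for $t\ge3/2>\sqrt2$.
\end{itemize}
All of these are one-line algebraic verifications.
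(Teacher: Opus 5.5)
Your proposal is correct and follows the paper's proof. You apply Theorem~\ref{thm:pair} with $p=q=t$, $g=t^2/24$ and $R=2t(1+t)^2/(t+2)^3$, reduce $B(t)>1$ to positivity of the same quartic $-t^4-10t^3-14t^2+32t+48$ (the paper's $H$), and settle the three failure claims by the same one-line computations. The only difference is cosmetic: you show positivity on $(0,12/7]$ by concavity and the endpoint value $h(12/7)=6480/2401$, whereas the paper writes $H(t)=\tfrac{6480}{2401}+(\tfrac{12}{7}-t)\,Q(t)$ with a cubic $Q$ having positive coefficients; both certificates are valid.
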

\begin{proof}
Here $A=2t$, $p=q=t$, $g=t^2/24$, and $R=2t(1+t)^2/(t+2)^3$. The normalized lower bound $B(t)=(1+t/2+t^2/24)(1-R)$ satisfies
\[
 \begin{aligned}
 B(t)-1&=\frac{tH(t)}{24(t+2)^3},\\
 H(t)&=\frac{6480}{2401}+\left(\frac{12}{7}-t\right)
 \left(t^3+\frac{82}{7}t^2+\frac{1670}{49}t+\frac{9064}{343}\right)>0.
 \end{aligned}
\]
Product compatibility would require $2+2t\le2+t$. For $t\ge3/2$, $A>11/4$ and $(1+t/2)(1-R)-1=-t(t^2-2)/[2(t+2)^2]<0$. At $t=3/2$ the corrected bound is $\Phi(P_t)\ge11387/1029>32/3$.
\end{proof}

\section*{Formal verification}
Online Resource~1 contains Lean~4.32.2 sources, pinned dependencies and build records. It proves the minimum as an actual-volume \texttt{IsLeast} statement, canonical attainment, the necessary array equality structure and both extensions. Coordinate symmetries complete the written classification in Theorem~\ref{thm:minimum}. The checked dependencies use only \texttt{propext}, \texttt{Classical.choice} and \texttt{Quot.sound}. The project repository is \href{https://github.com/Henry-Hanyue-Shen/A-sharp-minimum-for-cube-apex-bodies-in-every-dimension}{\texttt{A-sharp-minimum-for-cube-apex-bodies-in-every-dimension}}.

\section*{Statements and declarations}
No dedicated grant funding was received; the Stevens Institute for Artificial Intelligence provided computational resources. The authors declare no competing interests. No empirical datasets were generated or analysed; code and verification records are supplied in Online Resource~1. Hanyue Shen led the research and writing. Pavel Dubovski provided academic supervision. OpenAI Codex assisted mathematical exploration, Lean proof development, literature checking and manuscript preparation.

\end{document}